\theoremstyle{plain}
\newtheorem{thm}{Theorem}[section]
\theoremstyle{definition}
\newtheorem{defn}[thm]{Definition}
\newtheorem{ex}[thm]{Example}
\newtheorem{rem}[thm]{Remark}
\newtheorem{cor}[thm]{Corollary}
\numberwithin{equation}{section}
\newcommand{\R}{\mathbb{R}}
\begin{document}

\title[Eigenvalues of elliptic functional differential systems]{Eigenvalues of elliptic functional differential systems via a Birkhoff--Kellogg type theorem} 

\date{}

\author[G. Infante]{Gennaro Infante}
\address{Gennaro Infante, Dipartimento di Matematica e Informatica, Universit\`{a} della
Calabria, 87036 Arcavacata di Rende, Cosenza, Italy}%
\email{gennaro.infante@unical.it}%

\begin{abstract} 
Motivated by recent interest on Kirchhoff-type equations, in this short note we utilize a classical, yet very powerful, tool of nonlinear functional analysis in order to investigate the existence of positive eigenvalues of systems of elliptic functional differential equations. An example is presented to illustrate the theory.
\end{abstract}

\subjclass[2010]{Primary 35J47, secondary 35B09, 35J57, 35J60, 47H10}

\keywords{Positive solution, nonlocal elliptic system, functional boundary condition, cone, Birkhoff-Kellogg type theorem}

%\dedicatory{}

\maketitle

\section{Introduction}
A well known result in nonlinear analysis is the Birkhoff-Kellogg invariant-direction Theorem~\cite{B-K-1922}. In the case of an infinite-dimensional normed linear space $V$ this theorem reads as follows.
\begin{thm}~\cite[Theorem~6.1]{GD}%\label{B-K-1922}
Let $U$ be a bounded open neighborhood of $0$ in an infinite-dimensional normed linear space $(V, \| \, \|)$, and let $T:\partial U \to V$ be a compact map satisfying $\|T(x)\| \geq \alpha$ for some $\alpha > 0$ for every $x$ in $\partial U$. Then there exist $x_0 \in \partial U$ and $ \lambda_{0} \in (0,+\infty)$ such that $x_0 = \lambda_{0} F(x_0).$ 
\end{thm}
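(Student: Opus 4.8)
The plan is to reduce the theorem to Schauder's fixed point theorem, the one genuinely infinite-dimensional ingredient being that, when $\dim V=\infty$ and $U$ is convex, the boundary $\partial U$ is a retract of $\overline U$ (equivalently, that the unit sphere of $V$ is contractible); this is precisely the place where the hypothesis $\dim V=\infty$ is indispensable, since the conclusion fails in finite dimensions (a rotation of the plane has no invariant ray). Since $0$ is an interior point of $U$, the set $\partial U$ is disjoint from some ball $B(0,\rho)$ and contained in some ball $B(0,R)$, and I would first treat the model case in which $U$ is star-shaped with respect to $0$ (for instance a ball), returning to the general case at the end.

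So, assume $U$ is star-shaped with respect to $0$ and let $p$ be its Minkowski functional, so that $p$ is continuous, $U=\{p<1\}$, $\partial U=\{p=1\}$, and $p(y)\ge\|y\|/R>0$ for $y\ne0$. The key observation is that, because $\|T(x)\|\ge\alpha>0$ on $\partial U$, the sought conclusion ``$x_0=\lambda_0 T(x_0)$ with $x_0\in\partial U$'' is equivalent to saying that $x_0$ is a fixed point of the map
\[ S\colon\partial U\to\partial U,\qquad S(x)=\frac{T(x)}{p(T(x))}, \]
the corresponding $\lambda_0$ being $1/p(T(x_0))>0$. Here $S$ indeed maps $\partial U$ into itself since $p(S(x))=1$; it is continuous because the denominator stays $\ge\alpha/R$; and it is compact because $T(\partial U)$ is relatively compact and contained in $\{\|y\|\ge\alpha\}$, on which $y\mapsto y/p(y)$ is continuous. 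Thus everything reduces to producing a fixed point of the compact self-map $S$ of $\partial U$.

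To do this I would extend $S$, via Dugundji's extension theorem, to a continuous map $\widehat S\colon\overline U\to C$, where $C$ is the closed convex hull of $S(\partial U)$; this $C$ is a compact convex subset of $\overline U$ (working in the completion of $V$ if necessary so that closed convex hulls of compact sets are compact). Composing with a retraction $r\colon\overline U\to\partial U$ — which exists because $V$ is infinite-dimensional and $\overline U$ is a convex body — yields a compact map $g=r\circ\widehat S\colon\overline U\to\partial U\subseteq\overline U$, that is, a compact self-map of the closed convex set $\overline U$. By Schauder's fixed point theorem $g$ has a fixed point $x^{*}$, which lies in $\partial U$ because $g$ is $\partial U$-valued; and on $\partial U$ one has $g=r\circ\widehat S=r\circ S=S$, since $\widehat S$ extends $S$, $S$ is $\partial U$-valued, and $r$ restricts to the identity on $\partial U$. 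Hence $S(x^{*})=x^{*}$, and unwinding the reformulation gives $x_0=x^{*}\in\partial U$ and $\lambda_0=1/p(T(x^{*}))>0$.

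I expect the crux to be the infinite-dimensional input itself, namely the retraction $r\colon\overline U\to\partial U$ (contractibility of the unit sphere of $V$): it carries all the topological weight of the statement and has no finite-dimensional analogue. A secondary difficulty is passing from a star-shaped $U$ to an arbitrary bounded open neighbourhood of $0$, where the Minkowski-functional reduction is unavailable; there I would instead argue with the Leray--Schauder degree of $i-tT$ on $\overline U$ as $t$ runs over $[0,\infty)$, the point being that either admissibility of this homotopy breaks down at some $t_0>0$ on $\partial U$ — which is exactly the conclusion $x_0=t_0 T(x_0)$ — or the degree stays nonzero for all $t$, a case one must rule out using the lower bound $\|T\|\ge\alpha$ on $\partial U$. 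Finally, as already indicated, in a merely normed (non-complete) $V$ one must check that the compactness hypotheses above really hold, which is why it is convenient simply to pass to the completion.
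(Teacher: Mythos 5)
First, note that the paper does not prove this statement: it is quoted (with $F$ evidently a typo for $T$) from Granas--Dugundji \cite{GD}, so your proposal can only be measured against the standard proof of the Birkhoff--Kellogg invariant-direction theorem rather than against anything in the paper. For the model case your plan is sound: when $U$ is a ball (or a bounded closed convex body), normalizing $T$ by the Minkowski functional to obtain a compact self-map $S$ of $\partial U$, extending by Dugundji's theorem, composing with a retraction $r:\overline{U}\to\partial U$ (which exists precisely because $\dim V=\infty$), and applying Schauder is a correct and classical argument; you also correctly isolate the retraction as the infinite-dimensional input and flag the completeness issue. Two caveats: you repeatedly say ``star-shaped'' where you in fact need \emph{convex} --- for a merely star-shaped $U$ the Minkowski functional need not be continuous, $\overline{\operatorname{conv}}(S(\partial U))$ need not lie in $\overline{U}$, $\overline{U}$ is not an admissible domain for Schauder, and the retraction theorem you invoke is for convex bodies; and ``passing to the completion'' is not innocent, since $T$ is only defined on the boundary taken in $V$ and a fixed point produced by Schauder in the completion must still be shown to lie in $V$.

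The genuine gap is the general case, i.e.\ an arbitrary bounded open neighbourhood $U$ of $0$, which is what the theorem actually asserts. Your degree-theoretic fallback stops exactly where the work is: if $\deg(I-t\widetilde{T},U,0)$ stays equal to $1$ for all $t$, you only obtain points $x\in U$ with $x=t\widetilde{T}(x)\in t\operatorname{conv}(T(\partial U))$, and the lower bound $\|T\|\ge\alpha$ yields no contradiction there, because convex combinations of vectors of norm $\ge\alpha$ can have arbitrarily small norm. The standard way to close this case is different and uses infinite-dimensionality a second time, quantitatively: first rescale so that $\|T(x)\|\ge 2\rho$ on $\partial U$, where $\rho=\sup_{x\in\overline{U}}\|x\|$; then the unit vectors $(x-T(x))/\|x-T(x)\|$, $x\in\partial U$, all lie in a small neighbourhood of the totally bounded set $\{-T(x)/\|T(x)\|:x\in\partial U\}$, which cannot be $\varepsilon$-dense in the unit sphere of an infinite-dimensional normed space; choosing an omitted direction $v$ and running the admissible homotopy $x\mapsto x-\widetilde{T}(x)-sv$, $s\in[0,\infty)$, forces $\deg(I-\widetilde{T},U,0)=0$, contradicting the value $1$ obtained from the homotopy $I-t\widetilde{T}$. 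Without this (or an equivalent mechanism) your argument establishes the theorem only for convex $U$.
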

The invariant direction Theorem has been object of deep studies in the past, with applications and extensions in several directions, we refer the reader to \cite{applbook, BK, Casey, Fitz-Petr, Fitz-Petr2, Fu-Pe-Vi, Fu-Vi, guosun, Krasno, Kryszewski, Ivar-Charlie, Oregan} and references therein. In particular, we highlight that~\cite{Fitz-Petr, Krasno, Ivar-Charlie} provide interesting applications to the existence of eigenvalues and eigenfunctions of elliptic boundary value problems.

Here we make use of the following Birkhoff-Kellogg type result, which is set in cones. This is a special case of a result due to Krasnosel'ski\u{i} and Lady\v{z}enski\u{\i}~\cite{Kra-Lady}, see also~\cite[Theorem~5.5]{Krasno}.
Before stating the result, we recall that a cone $C$ of a real Banach space $(X,\| \, \|)$ is a closed set with $C+C\subset C$, $\mu C\subset C$ for all $\mu\ge 0$ and $C\cap(-C)=\{0\}$, and we introduce the following notation:
$$C_{r}:=\{x\in C: \|x\|<r\},\,
\overline{C}_{r}:=\{x\in C: \|x\|\leq r\},\ \text{and}\ \partial
C_{r}:=\{x\in C: \|x\|=r\}.$$

\begin{thm}\label{B-K}
Let $(X,\| \, \|)$ be a real Banach space, let $T:\overline{C}_{r}\to C$ be compact and suppose that 
$$\inf_{x\in \partial C_{r}}\|Tx\|>0.$$
Then there exist $\lambda_{0}\in (0,+\infty)$ and $x_{0}\in \partial C_{r}$
such that $x_{0}=\lambda_{0} Tx_{0}$.
\end{thm}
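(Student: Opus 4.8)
The plan is to argue by contradiction, using the theory of the fixed point index for compact maps on cones; the only properties I need are its normalisation and homotopy invariance, together with the standard fact that $i(A,C_{r},C)=0$ whenever there exists $e\in C\setminus\{0\}$ with $x\neq Ax+te$ for all $x\in\partial C_{r}$ and all $t\geq0$. Write $\alpha:=\inf_{x\in\partial C_{r}}\|Tx\|>0$ and, using that $T$ is compact, $M:=\sup_{x\in\overline{C}_{r}}\|Tx\|<+\infty$, so that $0<\alpha\leq M$; we may assume $C\neq\{0\}$ and fix $e\in C$ with $e\neq0$. Suppose, contrary to the conclusion, that $x\neq\lambda Tx$ for every $x\in\partial C_{r}$ and every $\lambda\in(0,+\infty)$. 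Then $\lambda T$ has no fixed point on $\partial C_{r}$ for any $\lambda>0$, so $i(\lambda T,C_{r},C)$ is defined for each $\lambda>0$; since $(\lambda,x)\mapsto\lambda Tx$ is a compact homotopy that is fixed-point-free on $\partial C_{r}$ over each compact subinterval of $(0,+\infty)$, the integer $i(\lambda T,C_{r},C)$ does not depend on $\lambda\in(0,+\infty)$.

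First I would compute this integer for small $\lambda$. If $0<\lambda<r/M$ then $\|\lambda Tx\|\leq\lambda M<r=\|x\|$ on $\partial C_{r}$, so the homotopy $(t,x)\mapsto t\lambda Tx$, $t\in[0,1]$, has no fixed point on $\partial C_{r}$; hence, by homotopy invariance and normalisation, $i(\lambda T,C_{r},C)=i(0,C_{r},C)=1$.

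The main point is to show that the same integer equals $0$, and this is where I expect the only real difficulty. By the lemma quoted above it suffices to prove that there is $N>0$ such that
\begin{equation*}
x\neq\lambda Tx+te\qquad\text{for all }\lambda\geq N,\ x\in\partial C_{r},\ t\geq0 .
\end{equation*}
If this were false, there would be $\lambda_{n}\to+\infty$, $x_{n}\in\partial C_{r}$ and $t_{n}\geq0$ with $x_{n}=\lambda_{n}Tx_{n}+t_{n}e$. As $T$ is compact and $\{x_{n}\}$ is bounded, a subsequence has $Tx_{n}\to y$, and $y\in C$ (since $C$ is closed) with $\|y\|\geq\alpha>0$. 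Dividing the identity by $\lambda_{n}$ and using $\|x_{n}\|/\lambda_{n}=r/\lambda_{n}\to0$ gives $(t_{n}/\lambda_{n})e\to-y$; comparing norms yields $t_{n}/\lambda_{n}\to\|y\|/\|e\|>0$, whence $-y=\lim(t_{n}/\lambda_{n})\,e\in C$. Then $y\in C\cap(-C)=\{0\}$, contradicting $\|y\|\geq\alpha>0$. This proves the displayed claim, so $i(\lambda T,C_{r},C)=0$ for all $\lambda\geq N$.

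The last two paragraphs contradict the independence of $i(\lambda T,C_{r},C)$ of $\lambda\in(0,+\infty)$, so the initial assumption is untenable: there exist $\lambda_{0}\in(0,+\infty)$ and $x_{0}\in\partial C_{r}$ with $x_{0}=\lambda_{0}Tx_{0}$. I should remark that replacing this index argument by a bare application of Schauder's theorem to a radial rescaling of $\lambda T$ on the convex set $\overline{C}_{r}$ does not work, because the fixed point so obtained can lie in the open part $C_{r}$ (for instance at the origin); it is the index computation that is responsible for forcing a solution to sit on $\partial C_{r}$.
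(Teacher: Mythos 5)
Your proof is correct, but there is nothing in the paper to compare it against: the paper does not prove Theorem~\ref{B-K}, it merely quotes it as a special case of the Krasnosel'ski\u{\i}--Lady\v{z}enski\u{\i} theorem and refers to the literature for the argument. Your fixed-point-index derivation is the standard modern route to this statement, and each step checks out. Under the contradiction hypothesis the index $i(\lambda T, C_r, C)$ is well defined and, by homotopy invariance over compact subintervals, independent of $\lambda\in(0,+\infty)$; the computation $i(\lambda T, C_r, C)=1$ for $0<\lambda<r/M$ via the linear homotopy to the zero map is sound; and the limiting argument for large $\lambda$ is the only delicate point and is handled correctly --- you use compactness of $T$ to extract $Tx_{n}\to y\in C$ with $\|y\|\geq\alpha$ (this is exactly where the hypothesis $\inf_{\partial C_r}\|Tx\|>0$ enters), and closedness of the cone to place $-y=\lim (t_n/\lambda_n)e$ in $C$, so that $y\in C\cap(-C)=\{0\}$ gives the contradiction. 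The three index properties you invoke (normalisation, homotopy invariance, and the lemma giving index $0$ from $x\neq Ax+te$ on $\partial C_r$) are all classical and available for instance in Amann's survey already cited in the paper, so the proof is self-contained modulo standard machinery. Your closing caveat --- that a naive Schauder argument on the convex set $\overline{C}_r$ cannot force the eigenfunction onto $\partial C_r$ --- is exactly the right thing to point out; it is the reason an index (or degree, or rotation-of-vector-fields) argument is needed here, and it is in this spirit that the original Krasnosel'ski\u{\i}--Lady\v{z}enski\u{\i} proof also proceeds.
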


By means of Theorem~\ref{B-K} we discuss the solvability, with respect to the parameter $\lambda$, of the following system of second order elliptic functional differential equations subject to functional boundary conditions (BCs)
\begin{equation}
 \label{nellbvp-intro}
 \left\{
\begin{array}{ccc}
L_i u_i=\lambda f_i(x, u, Du, w_i [u]), & \text{in}\ \Omega, & i=1,2,\ldots,n, \\
 B_i u_i=\lambda \zeta_i (x)h_{i}[u], & \text{on}\ \partial \Omega, & i=1,2,\ldots,n,
\end{array}
\right.
\end{equation}
where $\Omega \subset \mathbb{R}^{n}$ is a bounded domain with a sufficiently smooth boundary, $L_i$ is a strongly uniformly elliptic operator, $B_i$ is a first order boundary operator, 
 $u=(u_1,\dots, u_n)$, 
 $Du=(\nabla u_1,\dots, \nabla u_n) $, $f_i$ are continuous functions, $\zeta_i$ are sufficiently regular functions, $w_{i}$ and $h_{i}$ are suitable compact functionals.

The class of systems occurring in~\eqref{nellbvp-intro} is fairly general and allows us to deal with nonlocal problems of Kirchhoff-type. 
This is a very active area of research, a typical example of a Kirchhoff-type problem is
\begin{equation} \label{nonlocal-ToFu}
-M\left(\int_{\Omega} |\nabla u|^2\,dx\right)\Delta u= f(x,u), \ x\in \Omega,\quad u=0\ \text{on}\ \partial \Omega,
\end{equation}
which has been investigated by Ma in his survey~\cite{ToFu}. An extension to systems of the BVP~\eqref{nonlocal-ToFu} has been considered by Figueiredo and Su\'{a}rez~\cite{Giovany}, namely
\begin{equation} \label{SysGio}
\left\{
\begin{array}{ll}
-M_1\left(\int_{\Omega} |\nabla u_1|^2\,dx\right)\Delta u_1= f_1(x,u_1,u_2),& x \text{ in }\Omega , \\
-M_2\left(\int_{\Omega} |\nabla u_2|^2\,dx\right)\Delta u_2= f_2(x,u_1,u_2), & x \text{ in }\Omega , \\
u_1=u_2=0 & \text{on }\partial \Omega .%
\end{array}%
\right. 
\end{equation}%
The approach employed in~\cite{Giovany} is the sub-supersolution method. The system~\eqref{SysGio} has been studied also
by the sub-supersolution method in~\cite{BoGuChAl, MeBoGuAl, BoBoDj, BoGu}, while variational methods were employed in~\cite{FurOliSi, LouQin, Nguyen, ZhangSun}.

Note that there has been also interest in Kirchhoff-type problems with gradient terms appearing within the nonlinearities, we mention the recent papers by 
Alves and Boudjeriou~\cite{AlBo}, Yan and co-authors~\cite{YaOrAg}, Chen~\cite{Chen} and references therein.

The framework of~\eqref{nellbvp-intro} allows us to deal with non-homogenous BCs of functional type. In the case of nonlocal elliptic equations, non-homogeneous BCs have been investigated by Wang and An~\cite{WangAn}, Morbach and Corr\~{e}a~\cite{MorCor} and by the author~\cite{gi-jepeq}. 
The formulation of the functionals occurring in~\eqref{nellbvp-intro} allows us to consider \emph{multi-point} or \emph{integral} BCs. 
There exists a wide literature on this topic, 
 we refer the reader to the reviews~\cite{Cabada1, Conti, rma, sotiris, Stik, Whyburn} and the papers~\cite{Goodrich3, Goodrich4, kttmna, ktejde, Pao-Wang, Picone, jw-gi-jlms}.

Here we discuss, under fairly general conditions, the existence of positive eigenvalues with corresponding non-negative eigenfunctions for the system~\eqref{nellbvp-intro} and illustrate how these results can be applied in the case of nonlocal elliptic systems. Our results are new and complement previous results of the author~\cite{gi-jepeq}, by allowing the presence of gradient terms within the nonlinearities and the functionals. The results also complement the ones in~\cite{sBaCgI}, by considering more general nonlocal elliptic systems.

\section{Eigenvalues and eigenfunctions}
In what follows, for every $\hat{\mu}\in (0,1)$ we denote by $C^{\hat{\mu}}(\overline{\Omega})$ the space of all $\hat{\mu}$-H\"{o}lder continuous functions $g:\overline{\Omega}\to \mathbb{R}$ and, for every $k\in \mathbb{N}$, we denote by $C^{k+\hat{\mu}}(\overline{\Omega})$ the space of all functions $g\in C^{k}(\overline{\Omega})$ such that all the partial derivatives of $g$ of order $k$ are $\hat{\mu}$-H\"{o}lder continuous in $\overline{\Omega}$ (for more details see \cite[Examples~1.13 and~1.14]{Amann-rev}).

We make the following assumptions on the domain $\Omega$ and the operators $L_i$ and $B_i$ and the functions $\zeta_i$ that occur in~\eqref{nellbvp-intro}
 (see \cite[Section~4 of Chapter~1]{Amann-rev} and \cite{Lan1, Lan2})): 
\begin{enumerate}
\item $\Omega\subset \R^m$, $m\ge 2$, is a bounded domain such that its boundary $\partial \Omega$ is an $(m-1)$-dimensional $C^{2+\hat{\mu}}-$manifold for some $\hat{\mu}\in (0,1)$, such that $\Omega$ lies locally on one side of $\partial \Omega$ (see \cite[Section 6.2]{zeidler} for more details).
\item $L_i$ is a the second order elliptic operator given by
\begin{equation*}
L_i u(x)=-\sum_{j,l=1}^m a_{ijl}(x)\frac{\partial^2 u}{\partial x_j \partial x_l}(x)+\sum_{j=1}^m a_{ij}(x) \frac{\partial u}{\partial x_j} (x)+a_i (x)u(x), \quad \mbox{for $x\in \Omega$,}
\end{equation*}
where $a_{ijl},a_{ij},a_i\in C^{\hat{\mu}}(\overline{\Omega})$ for $j,l=1,2,\ldots,m$, $a_i(x)\ge 0$ on $\bar{\Omega}$, $a_{ijl}(x)=a_{ilj}(x)$ on $\bar{\Omega}$ for $j,l=1,2,\ldots,m$. Moreover $L_i$ is strongly uniformly elliptic; that is, there exists $\bar{\mu}_{i0}>0$ such that 
$$\sum_{j,l=1}^m a_{ijl}(x)\xi_j \xi_l\ge \bar{\mu}_{i0} \|\xi\|^2 \quad \mbox{for $x\in \Omega$ and $\xi=(\xi_1,\xi_2,\ldots,\xi_m)\in\R^m$.}$$

\item $B_i$ is a boundary operator given by
$$B_i u(x)=b_i(x)u(x)+\delta_i \frac{\partial u}{\partial \nu}(x) \quad \mbox{for $x\in\partial \Omega$},$$
where $\nu$ is an outward pointing and nowhere tangent vector field on $\partial \Omega$ of class $C^{1+\hat{\mu}}$ (not necessarily a unit vector field), $\frac{\partial u}{\partial \nu}$ is the directional derivative of $u$ with respect to $\nu$, $b_i:\partial \Omega \to \R$ is of class $C^{1+\hat{\mu}}$ and moreover one of the following conditions holds:
\begin{enumerate}
\item $\delta_i =0$ and $b_i(x)\equiv 1$ (Dirichlet boundary operator).
\item $\delta_i =1$, $b_i(x)\equiv 0$ and $a_i(x)\not\equiv 0$ (Neumann boundary operator).
\item $\delta_i =1$, $b_i(x)\ge 0$ and $b_i(x)\not\equiv 0$ (Regular oblique derivative boundary operator).
\end{enumerate}

\item $\zeta_i \in C^{2-\delta_i+\hat{\mu}}(\partial\Omega)$. 
\end{enumerate}

It is known that, under the previous conditions (see \cite{Amann-rev}, Section 4 of Chapter 1), a strong maximum principle holds,
given $g\in C^{\hat{\mu}}(\bar{\Omega})$, the~BVP 
 \begin{equation}
 \label{eqelliptic}
 \left\{
\begin{array}{ll}
  L_{i}u(x)=g(x), & x\in \Omega, \\
 B_{i}u(x)=0, & x\in \partial \Omega,
\end{array}
\right.
\end{equation}
admits a unique classical solution $u\in C^{2+\hat{\mu}}(\bar{\Omega})$ and, moreover, given $\zeta_i \in C^{2-\delta_i+\hat{\mu}}(\partial\Omega)$ the~BVP 
 \begin{equation}
 \label{eqellipticbc}
 \left\{
\begin{array}{ll}
  L_{i}u(x)=0, & x\in \Omega, \\
 B_{i}u(x)=\zeta_i(x), & x\in \partial \Omega,
\end{array}
\right.
\end{equation}
also admits a unique solution $\gamma_i \in C^{2+\hat{\mu}}(\bar{\Omega})$.

In order to investigate the solvability of the system~\eqref{nellbvp-intro}, we make use of the cone of non-negative functions $\hat{P}=C(\bar{\Omega},\R_+)$. The solution operator associated to the BVP~\eqref{eqelliptic}, $K_i:C^{\hat{\mu}}(\bar{\Omega})\to C^{2+\hat{\mu}}(\bar{\Omega})$, defined as $K_{i}g=u$ is linear and continuous. It is also known (see \cite{Amann-rev}, Section 4 of Chapter 1) that $K_{i}$ can be extended uniquely to a continuous, linear and compact operator (that we denote again by the same name) $K_{i}:C(\bar{\Omega})\to C^{1}(\bar{\Omega})$ that leaves the cone $\hat{P}$ invariant, that is $K_{i}(\hat{P})\subset \hat{P}$.

We utilize the space $C^{1}(\bar{\Omega},\R^n)$, endowed with the norm 
$$\|u\|_{1}:=\displaystyle\max \{\|u_i\|_{\infty}, \|\partial_{x_j} u_i \|_{\infty}: i=1,2,\ldots,n,\ j=1,2,\ldots,m\},$$ where $\|z\|_{\infty}=\displaystyle\max_{x\in \bar{\Omega}}|z(x)|,$ and consider the cone $P=C^{1}(\bar{\Omega},\R^n_+)$. We rewrite the elliptic system~\eqref{nellbvp-intro} as a fixed point problem, by considering the operators $T,\Gamma:C^1(\bar{\Omega}, \R^n) \to C^1(\bar{\Omega},\R^n)$ given by 
\begin{align}
T(u):=(K_i F_i(u))_{i=1..n},\quad
\Gamma (u):=(\gamma_i h_{i}[u] )_{i=1..n},
\end{align}
where $K_i$ is the above mentioned extension of the solution operator associated to~\eqref{eqelliptic},
 $\gamma_i \in C^{2+\hat{\mu}}(\overline{\Omega})$ is the unique solution of the BVP~\eqref{eqellipticbc} and
$$F_i(u)(x):=f_i(x,u(x),Du(x), w_i[u]),\ \text{for}\ u\in C^{1}(\bar{\Omega}, I)\ \text{and}\ x\in \bar{\Omega}.$$

\begin{defn} We say that $\lambda$ is an~\emph{eigenvalue} of the system~\eqref{nellbvp-intro} if there exists 
 $u\in C^1(\bar{\Omega})$ with $\|u\|_1>0 $ such that the pair $(u,\lambda)$ satisfies the operator equation 
\begin{equation}\label{chareq}
u=\lambda(Tu+\Gamma u)=\lambda( K_i F_i(u)+ \gamma_i h_{i}[u] )_{i=1..n}.
\end{equation}
If the pair $(u,\lambda)$ satisfies~\eqref{chareq} we say that $u$ is an~\emph{eigenfunction} of the system~\eqref{nellbvp-intro} corresponding to the eigenvalue $\lambda$. If, furthermore, the components of $u$ are non-negative, we say that $u$ is a \emph{non-negative eigenfunction} of the system~\eqref{nellbvp-intro}.
\end{defn}

\begin{thm}\label{eigen}Let $\rho \in (0,+\infty)$ and assume the following conditions hold. 
\begin{itemize}

\item[(a)] For every $i=1,2,\ldots,n$, $w_{i}: \overline{P}_{\rho} \to \mathbb{R}$ is continuous and there exist
$\underline{w}_{i,\rho}, \overline{w}_{i,\rho}\in \R$ such that
\begin{equation}\label{w-cond}
\underline{w}_{i,\rho}\leq w_{i}[u]\leq \overline{w}_{i,\rho},\ \text{for every}\ u\in \overline{P}_{\rho}.
\end{equation}

\item[(b)] 
For every $i=1,2,\ldots,n$, $f_i\in C(\Pi_{\rho}, \R)$ and there exist $\delta_i \in C(\bar{\Omega},\R_+)$ such that
\begin{equation}\label{f-cond}
f_{i}(x,u,v,w)\ge \delta_{i, \rho}(x),\ \text{for every}\ (x,u,v,w)\in \Pi_{\rho},
\end{equation}
where $$\Pi_{\rho}:=\bar{\Omega}\times [0,\rho]^n \times [-\rho,\rho]^{m\times n}\times [\underline{w}_{i,\rho}, \overline{w}_{i,\rho}].$$

\item[(c)] For every $i=1,2,\ldots,n$, $\zeta_i \in C^{2-\delta_i+\hat{\mu}}(\partial\Omega)$, $\zeta_i\geq 0$, 
$h_{i}: \overline{P}_{\rho} \to \R$ is continuous and bounded. Let $\eta_{i, \rho}\in [0,+\infty)$ be such that 
\begin{equation}\label{h-cond}
h_{i}[u]\geq \eta_{i, \rho},\ \text{for every}\ u\in \overline{P}_{\rho} .
\end{equation}

\item[(d)] There exist $i_0\in \{1,\ldots,n\}$ and $ \phi_{i_0, \rho} \in (0,+\infty)$ such that
\begin{equation}\label{c-cond}
\|K_{i_0}(\delta_{i_0, \rho})+ \eta_{i_0, \rho} \gamma_{i_0}\|_{\infty}\geq \phi_{i_0, \rho}.
\end{equation}
\end{itemize}

Then the system~\eqref{nellbvp-intro} has a positive eigenvalue with an associated eigenfunction $u\in \partial P_{\rho}$.
\end{thm}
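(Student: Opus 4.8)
The plan is to apply the cone-theoretic Birkhoff--Kellogg result, Theorem~\ref{B-K}, to the operator $S := T + \Gamma$ on the ball $\overline{P}_{\rho}$ in the space $X = C^{1}(\bar{\Omega},\R^n)$ with cone $P = C^{1}(\bar{\Omega},\R^n_+)$. The conclusion of Theorem~\ref{B-K} gives $\lambda_0 \in (0,+\infty)$ and $u \in \partial P_{\rho}$ with $u = \lambda_0 S u = \lambda_0(Tu + \Gamma u)$, which is exactly the eigenvalue equation~\eqref{chareq} with an eigenfunction of norm $\rho$; since $u \in P$, its components are automatically non-negative. So the proof reduces to verifying the two hypotheses of Theorem~\ref{B-K}: that $S$ maps $\overline{P}_{\rho}$ into $P$ and is compact, and that $\inf_{u \in \partial P_{\rho}} \|Su\|_1 > 0$.

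First I would check that $S$ is well defined and continuous on $\overline{P}_{\rho}$ and maps it into the cone $P$. For $u \in \overline{P}_{\rho}$ one has $0 \le u_i(x) \le \rho$ and $|\partial_{x_j} u_i(x)| \le \rho$, so the argument $(x, u(x), Du(x), w_i[u])$ lies in $\Pi_{\rho}$, using (a) to bound the functional slot; hence $F_i(u)$ is a well-defined element of $C(\bar{\Omega})$, and by (b) it satisfies $F_i(u)(x) \ge \delta_{i,\rho}(x) \ge 0$. Since $K_i$ is the compact linear extension $C(\bar{\Omega}) \to C^{1}(\bar{\Omega})$ leaving $\hat{P}$ invariant, $K_i F_i(u) \in C^{1}(\bar{\Omega},\R_+)$; similarly $\gamma_i \in C^{2+\hat{\mu}}(\bar{\Omega})$ solves~\eqref{eqellipticbc} with $\zeta_i \ge 0$, so by the maximum principle $\gamma_i \ge 0$ on $\bar{\Omega}$, and by (c) $h_i[u] \ge \eta_{i,\rho} \ge 0$, so $\gamma_i h_i[u] \in C^{1}(\bar{\Omega},\R_+)$. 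Thus $Su \in P$. Continuity of $S$ follows from continuity of $f_i$, of the functionals $w_i, h_i$, and of the (linear, hence continuous) operators $K_i$; compactness follows since each $K_i : C(\bar{\Omega}) \to C^{1}(\bar{\Omega})$ is compact, the maps $u \mapsto F_i(u)$ and $u \mapsto h_i[u]$ send the bounded set $\overline{P}_{\rho}$ to bounded sets (by (b) continuity on the compact set $\Pi_\rho$, and by (c) boundedness of $h_i$), and $\Gamma$ has finite-dimensional-type range in each component (a fixed function times a bounded scalar), so $S(\overline{P}_{\rho})$ is relatively compact in $C^{1}(\bar{\Omega},\R^n)$.

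The heart of the proof is the lower bound $\inf_{u \in \partial P_{\rho}} \|Su\|_1 > 0$, and this is where condition (d) enters. For any $u \in \partial P_{\rho}$ (indeed any $u \in \overline{P}_{\rho}$), monotonicity of $K_{i_0}$ on the cone together with $F_{i_0}(u) \ge \delta_{i_0,\rho}$ and $h_{i_0}[u] \ge \eta_{i_0,\rho}$ gives, pointwise on $\bar\Omega$,
\begin{equation*}
(Su)_{i_0}(x) = K_{i_0}F_{i_0}(u)(x) + \gamma_{i_0}(x)\, h_{i_0}[u] \ge K_{i_0}(\delta_{i_0,\rho})(x) + \eta_{i_0,\rho}\,\gamma_{i_0}(x) \ge 0 .
\end{equation*}
Taking the sup over $x \in \bar\Omega$ and using~\eqref{c-cond},
\begin{equation*}
\|Su\|_1 \ge \|(Su)_{i_0}\|_{\infty} \ge \big\|K_{i_0}(\delta_{i_0,\rho}) + \eta_{i_0,\rho}\,\gamma_{i_0}\big\|_{\infty} \ge \phi_{i_0,\rho} > 0 ,
\end{equation*}
where the first inequality is just the definition of $\|\cdot\|_1$. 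Since the right-hand side is a positive constant independent of $u$, the infimum over $\partial P_{\rho}$ is at least $\phi_{i_0,\rho} > 0$.

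With both hypotheses of Theorem~\ref{B-K} verified, that theorem yields $\lambda_0 > 0$ and $u \in \partial P_{\rho}$ with $u = \lambda_0 S u$, i.e. $\lambda_0$ is an eigenvalue of~\eqref{nellbvp-intro} with non-negative eigenfunction $u$ of norm $\|u\|_1 = \rho$, completing the proof. I expect the only genuinely delicate point to be the compactness of $S$ in the $C^1$-norm: one must be careful that $K_i$ is the compact extension $C(\bar\Omega)\to C^1(\bar\Omega)$ (not merely $C^{\hat\mu}(\bar\Omega)\to C^{2+\hat\mu}(\bar\Omega)$) and that the nonlinearity $u \mapsto F_i(u)$ indeed maps $\overline{P}_{\rho} \subset C^1$ continuously into a \emph{bounded} subset of $C(\bar\Omega)$ — which is immediate because $f_i$ is continuous on the compact set $\Pi_\rho$; everything else is bookkeeping with the cone and the maximum principle.
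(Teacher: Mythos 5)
Your proposal is correct and follows essentially the same route as the paper: both verify that $T+\Gamma$ maps $\overline{P}_{\rho}$ into $P$ and is compact, derive the pointwise lower bound $K_{i_0}F_{i_0}(u)(x)+\gamma_{i_0}(x)h_{i_0}[u]\ge K_{i_0}(\delta_{i_0,\rho})(x)+\eta_{i_0,\rho}\gamma_{i_0}(x)$, pass to the supremum to get the uniform bound $\phi_{i_0,\rho}$ via condition (d), and conclude with Theorem~\ref{B-K}. Your write-up simply spells out the cone-invariance and compactness details that the paper compresses into a parenthetical remark.
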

\begin{proof}
Due to the assumptions above, the operator $T+\Gamma$ maps $\overline{P}_{\rho}$ into $P$ and is compact (by construction, the map $F$ is continuous and bounded and $\Gamma$ is a finite rank operator). Take $u\in \partial P_{\rho}$, then for every $x\in \bar{\Omega}$ we have
\begin{equation}\label{lwest}
K_{i_0} F_{i_0} u(x)+ \gamma_{i_0}(x) h_{i_0}[u] 
\ge  K_{i_0}(\delta_{i_0, \rho})(x) + \eta_{i_0, \rho} \gamma_{i_0}(x) .
\end{equation}
Taking the supremum for $x\in \bar{\Omega}$ in~\eqref{lwest} we obtain 
\begin{equation}\label{lwest2}
\| Tu+\Gamma u\|_{1}\geq \|T_{i_0}u+\Gamma_{i_0} u\|_{\infty}\geq \|K_{i_0}(\delta_{i_0, \rho})+ \eta_{i_0, \rho} \gamma_{i_0}\|_{\infty}\geq \phi_{i_0, \rho}.
\end{equation}
Note that the RHS of \eqref{lwest2} does not depend on the particular $u$ chosen. Therefore we have
$$
\inf_{u\in \partial P_{\rho}}\| Tu+\Gamma u\|\geq \phi_{i_0, \rho}>0,
$$
and the result follows by Theorem~\ref{B-K}.
\end{proof}
\begin{rem}
Note that we have chosen to use \emph{inequalities} in~\eqref{w-cond}-\eqref{c-cond}; this is due that, in applications, it is often easier and somewhat more efficient to use~\emph{estimates} on the nonlienarieties involved. Furthermore note that, in our reasoning, what really matters is that some positivity occurs in one component of the system, either in the nonlinearity $f_i$ or in the functional $h_i$.
\end{rem}
The following Corollary provides a sufficient condition for the existence of an unbounded set of eigenfunctions for the system~\eqref{nellbvp-intro}.
\begin{cor}\label{CorEig}
In addition to the hypotheses of Theorem~\ref{eigen}, assume that $\rho$ can be chosen arbitrarily in $(0,+\infty)$. Then for every $\rho$ there exists a non-negative eigenfunction $u_{\rho}\in \partial P_{\rho}$ of the system~\eqref{nellbvp-intro} to which corresponds a $\lambda_{\rho} \in (0,+\infty)$.
\end{cor}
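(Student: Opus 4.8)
The plan is simply to apply Theorem~\ref{eigen} once for each prescribed radius. Fix an arbitrary $\rho\in(0,+\infty)$. By the standing assumption of the Corollary, conditions (a)--(d) of Theorem~\ref{eigen} hold for this particular $\rho$, so that theorem produces a parameter $\lambda_\rho\in(0,+\infty)$ and a function $u_\rho\in\partial P_\rho$ satisfying the operator equation $u_\rho=\lambda_\rho(Tu_\rho+\Gamma u_\rho)$, i.e. $(u_\rho,\lambda_\rho)$ solves~\eqref{chareq}. Since $u_\rho\in\partial P_\rho$ we have $\|u_\rho\|_1=\rho>0$, so $u_\rho$ is an eigenfunction of the system~\eqref{nellbvp-intro} in the sense of the Definition above, with corresponding positive eigenvalue $\lambda_\rho$.

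Next I would verify the non-negativity claim and deduce unboundedness. By construction $\partial P_\rho\subset P=C^1(\bar{\Omega},\R^n_+)$, hence every component of $u_\rho$ is a non-negative function on $\bar{\Omega}$; thus $u_\rho$ is a \emph{non-negative} eigenfunction. Carrying out this argument for every $\rho\in(0,+\infty)$ yields a family $\{u_\rho\}_{\rho>0}$ of non-negative eigenfunctions with $\|u_\rho\|_1=\rho$; in particular distinct values of $\rho$ produce distinct eigenfunctions, so the family is unbounded in $C^1(\bar{\Omega},\R^n)$, which is the announced unbounded set of eigenfunctions.

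I do not anticipate a genuine obstacle here: all the analytic content is already packaged in Theorem~\ref{eigen}, and the Corollary only exploits that its hypotheses have been assumed to hold uniformly in $\rho$. The single point deserving a line of care is that $\rho$ enters the data of Theorem~\ref{eigen} through the $\rho$-dependent quantities $\underline{w}_{i,\rho},\overline{w}_{i,\rho}$, $\delta_{i,\rho}$, $\eta_{i,\rho}$ and $\phi_{i_0,\rho}$, so the phrase ``the hypotheses of Theorem~\ref{eigen} hold for arbitrary $\rho$'' must be read as: for each $\rho$ these data can be chosen so that (a)--(d) are satisfied. Granting this, the conclusion follows at once.
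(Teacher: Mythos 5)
Your proposal is correct and coincides with the argument the paper intends: the Corollary is stated without proof precisely because it follows by applying Theorem~\ref{eigen} separately for each $\rho\in(0,+\infty)$, with non-negativity coming from $\partial P_{\rho}\subset P=C^{1}(\bar{\Omega},\R^{n}_{+})$. Your added remark that the $\rho$-dependent data in hypotheses (a)--(d) must be re-chosen for each $\rho$ is exactly the right reading of the hypothesis.
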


We now show the applicability of the above results in the context of systems of nonlocal elliptic equations with functional BCs.
\begin{ex}%\label{exex}
Take $\Omega=\{x\in \mathbb{R}^2 : \|x\|_2<1\}$ and consider the system
\begin{equation} \label{example}
\left\{
\begin{array}{ll}
-\bigl(e^{u_2(0)}+\int_{\Omega}| \nabla u_1|^2\,dx\bigr)\Delta u_1=\lambda e^{u_1}(1+| \nabla u_2|^2),& \text{in }\Omega , \\
-e^{(\int_{\Omega}| \nabla u_1|^2+| \nabla u_2|^2\,dx)}\Delta u_2=\lambda u_2^2| \nabla u_1|^2, & \text{in }\Omega , \\
u_1=\lambda h_{1}[(u_1,u_2)],\ u_2=\lambda h_{2}[(u_1,u_2)], & \text{on }\partial \Omega ,%
\end{array}%
\right. 
\end{equation}%
where 
$$
h_{1}[(u_1,u_2)]= u_1(0)+\bigl( \frac{\partial u_2}{\partial x_1}(0)\bigr)^{2}\ \mbox{and} \ h_{2}[(u_1,u_2)]=(u_1(0))^2+\int_{\Omega}| \nabla u_2|^2\,dx.
$$
Denote by $\hat{1}$ the function equal to $1$ on $\bar{\Omega}$.
Note that for $i=1,2$, $K_{i}(\hat{1})=\frac{1}{4}(1-x_1^2-x_2^2)$, where $x=(x_1,x_2)$, and $\|K_{i}(\hat{1})\|_{\infty}=\frac{1}{4}$. Furthermore note that we may take $\gamma_{1}=\gamma_{2}\equiv 1$.

 We fix $\rho\in (0,+\infty)$ and consider 
\begin{align*}
f_1(u_1,u_2, \nabla u_1, \nabla u_2, w_1[(u_1,u_2)]):&=e^{u_1}(1+| \nabla u_2|^2)w_1[(u_1,u_2)],\\
f_2(u_1,u_2, \nabla u_1, \nabla u_2, w_2[(u_1,u_2)]) :&= u_2^2| \nabla u_1|^2w_2[(u_1,u_2)],
\end{align*}
where
\begin{align*}
w_1[(u_1,u_2)]:&=\Bigl(e^{u_2(0)}+\int_{\Omega}| \nabla u_1|^2\,dx\Bigr)^{-1}, \\ 
w_2[(u_1,u_2)]:&=e^{-(\int_{\Omega}| \nabla u_1|^2+| \nabla u_2|^2\,dx)}.
\end{align*}

In this case we may take 
$$[\underline{w}_{1,\rho}, \overline{w}_{1,\rho} ]=[(2\pi \rho^2+e^{\rho})^{-1},1],\ [\underline{w}_{2,\rho}, \overline{w}_{2,\rho} ]=[e^{-4\pi \rho^2},1],$$
$$\delta_{1, \rho}(x)\equiv (2\pi \rho^2+e^{\rho})^{-1},\ \delta_{2, \rho}(x)\equiv 0,\ \eta_{1, \rho}=\eta_{2, \rho}=0,$$
and therefore we get
$$
\|K_{1}(\delta_{1, \rho})+ \eta_{1, \rho} \gamma_{1}\|_{\infty}=(8\pi \rho^2+4e^{\rho})^{-1}= \phi_{1, \rho}>0.
$$
Thus we can apply Corollary~\ref{CorEig}, obtaining a uncountably many pairs  $(u_{\rho}, \lambda_{\rho})$ of  non-negative eigenfunctions and positive eigenvalues for the system~\eqref{example}.
\end{ex}

\section*{Acknowledgement}
G. Infante was partially supported by G.N.A.M.P.A. - INdAM (Italy).

\end{document}